\documentclass[12pt]{amsart}

\usepackage{amsmath}
\usepackage{amsfonts}
\usepackage{amssymb}
\usepackage[a4paper]{geometry}
\usepackage{amsthm}

\newtheorem{theorem}{Theorem}
\newtheorem{proposition}{Proposition}
\newtheorem{lemma}{Lemma}

\author[O. Ostrovska]{Olha Ostrovska}
\email{olyushka.ostrovska@gmail.com}
\address{National Technical University of Ukraine ``Igor Sikorsky Kyiv Polytechnic Institute'')}

\author[V. Ostrovskyi]{Vasyl Ostrovskyi}
\email{vo@imath.kiev.ua}
\address{Institute of Mathematics, NAS of Ukraine}

\author[D. Proskurin]{Danylo Proskurin}
\email{prohor75@gmail.com}
\address{Kyiv National Taras Shevchenko University}

\author[Yu. Samoilenko]{Yurii Samoilenko}
\email{yurii\_sam@imath.kiev.ua}
\address{Institute of Mathematics, NAS of Ukraine}

\title[Representations of $q_{ij}$-commuting isometries]{A class of representations of $C^*$-algebra generated by $q_{ij}$-commuting isometries}
\date{}

\begin{document}
\begin{abstract}
 For $C^*$-algebra generated by a finite family of isometries $s_j$, $j=1,\dots,d$ satisfying $q_{ij}$-commutation relations
 \[
  s_j^* s_j = I, \quad s_j^* s_k = q_{ij}s_ks_j^*, \qquad q_{ij} = \bar q_{ji}, |q_{ij}|<1, \ 1\le i \ne j \le d, 
 \]
we construct an infinite  family of unitarily non-equivalent irreducible representations. These representations are deformations of the corresponding class of representations of the Cuntz algebra $\mathcal O_d$.
\end{abstract}
\maketitle

\section{Introduction}
Algebras with Wick ordering \cite{pos,jsw,wick3} and their representations have been studied by various authors. Wick algebra $W_d(T)$ is an associative unital $*$-algebra over $\mathbb C$ generated by a finite number of elements $a_j, a_j^*$, $j=1.\dots, d$, for which the following relations hold
\[
 a_j^*a_k = \delta_{jk} I + \sum_{l,m=1}^d T_{jk}^{lm}a_ma_l^*, \quad \bar T_{jk}^{lm} = T_{kj}^{ml}, \ j,k,l,m =1,\dots, d.
\]
In particular, if all $T_{jk}^{lm}=0$, then relations in $W_d(T)$ generate the Cuntz-Toeplitz $C^*$-algebra $\mathcal O_d^0$,
\[
 a_j^* a_k = \delta_{jk} I, \quad j,k =1,\dots, d.
\]

It was cojectured (and proved in some cases) in \cite{wick3} that for smal enough coefficients $T_{jk}^{lm}$, $j,k,l,m=1,\dots, d$, for $W_d(T)$ there exists a universal enveloping $C^*$-algebra which is isomorphic to $\mathcal O_d^0$. In \cite{jps2}, it has been shown that the $C^*$-algebra generated by a pair of $q$-commuting isometries,
\[
 s_1^*s_1 = s_2^*s_2 =I, \quad s_1^*s_2 = q s_2s_1^*,
\]
is isomorphic to $\mathcal O_2^0$ for all $|q|<1$; however, in the case of $d>2$, such result is still a conjecture.

In this paper, we study representations of Wick ordered $C^*$-algebra $W$ generated by elements $s_j$, $j=1,\dots, d$, satisfying relations
\begin{equation}\label{eq:q_ij}
 s_i^*s_i = I, \quad s_i^*s_j = q_{ij}s_js_i^*, \quad |q_{ij}|<1, \ q_{ij} = \bar q_{ji}, \qquad 1\le i\ne j\le d. 
\end{equation}
In general situation of Wick algebras the most known is the Fock representation \cite{fock1,fock2} and coherent \cite{coherent} representations. We construct a new family of irreducible non-Fock representations which are deformations of a certain family of irreducible representations of the Cuntz algebra $\mathcal O_d$.

\section{Preliminaries}
\subsection{Some notations}
We start with some notations. Let $\alpha =(\alpha_1,\dots,\alpha_m)\in \{1,\dots,d\}^m$ be a finite multiindex of length $m$, $|\alpha|=m$, let $\Lambda_m = \{1,\dots,d\}^m$ be the set of all finite multiindices of length $m$, $\Lambda_0=\emptyset$, and let $\Lambda^0 =\cup_{m=0}^\infty \Lambda_m$ be the set of all finite multiindices of arbitrary length. Also, we will use the set $\Lambda = \{1,\dots,d\}^\infty$ of all infinite multiindices.
For each finite multiindex $\alpha =(\alpha_1,\dots,\alpha_m) \in \Lambda^0$ we use notation $s_\alpha = s_{\alpha_1}\dots s_{\alpha_m}$. For a finite multiindex we use standard mappings:
\begin{align*}
 \Lambda_m \ni \alpha &= (\alpha_1,\dots,\alpha_m) \mapsto \sigma (\alpha) = (\alpha_2,\dots,\alpha_m) \in \Lambda_{m-1},
 \\
 \Lambda_m \ni \alpha &= (\alpha_1,\dots,\alpha_m) \mapsto \sigma_j (\alpha) = (j,\alpha_2,\dots,\alpha_m) \in \Lambda_{m+1}, \quad j=1,\dots, d.
\end{align*}

If $\alpha$ does not contain $j$, then \eqref{eq:q_ij} implies
\[
 s_j^*  s_\alpha = q(j,\alpha) s_\alpha s_j^*, \quad q(j,\alpha) = q_{j\alpha_1}\dots q_{j\alpha_m}.
\]
If $\alpha$ contains $j$, then $\alpha$ can be represented as $\alpha=(\alpha'j\alpha'')$, where $\alpha'$ does not contain $j$, then
\[
 s_j^*s_\alpha = q(j,\alpha')s_{\alpha'}s_{\alpha''}  = q(j,\alpha)s_{\alpha\setminus j}
\]
(here and below, we denote by $\alpha\setminus j = (\alpha'\alpha'')$ multiindex obtained from $\alpha$ by removing the first occurrence of $j$, and set $q(j,\alpha) = q(j,\alpha')$ for convenience).

Similarly, for any finite multiindices $\alpha = (\alpha_1,\dots, \alpha_m)\in \{1,\dots,d\}^m$, $\beta = (\beta_1,\dots,\beta_n)\in \{1,\dots,d\}^n$, $n,m\ge 1$, one can define $\alpha \setminus \beta$ inductively as follows: 
\[
 \alpha\setminus \beta = \begin{cases}(\alpha\setminus \beta_1) \setminus \sigma(\beta), & \beta_1 \subset \alpha,\\ \alpha \setminus \sigma(\beta), & \text{otherwise}\end{cases}
\]
Setting  $s_\emptyset =I$, we obtain 
\[
 s_\alpha^* s_\beta = q(\alpha,\beta) s_{\beta\setminus \alpha} s^*_{\alpha\setminus \beta},
\]
where $q(\alpha,\beta)$ is calculated in an obvious way inductively.

If $\beta$ is a permutation of $\alpha$, then $\alpha \setminus \beta = \beta\setminus \alpha = \emptyset$, and we have $s_\alpha^* s_\beta = q(\alpha,\beta) I$. Also, in this case, for any multiindex $\delta$ we have 
\[
 s_{(\alpha\delta)}^* s_{(\beta\delta)} = q(\alpha,\beta)I = q((\alpha\delta),(\beta\delta)) I.
\]
For infinite multiindices, we define
\begin{equation}\label{eq:qbg}
 q(\alpha,\beta) = \lim_{m\to\infty} q((\alpha_1,\dots,\alpha_m),(\beta_1,\dots,\beta_m)).
\end{equation}
The limit exists since $|q|<1$ and is nonzero only if the sequence becomes stationary, i.e., if there exists $m$ such that $\sigma^m(\alpha) = \sigma^m(\beta)$, and $(\beta_1,\dots,\beta_m)$ is a permutation of $(\alpha_1,\dots, \alpha_m)$.

\subsection{Fock representation}

Fock representation $\pi_F$ is a $*$-representation of $W$ which is determined by the condition that there exists a (unique up to a constant) unit vector $\Omega$ for which $\pi_F(s_j^*)\Omega =0$, $1\le j\le d$. The representation space $\mathcal F$ (Fock space) is spanned by vectors $e_\alpha = \pi_F(s_\alpha) \Omega$, $\alpha \in \Lambda^0$ equipped with the Fock scalar product
\[
 (e_\alpha,e_\beta)_F = (\pi_F(s_\beta^*s_a\alpha)\Omega,\Omega)_F
 =\begin{cases} q(\beta,\alpha),&|\alpha|=|\beta|, \ \text{$\beta$ is a permutation of $\alpha$,}\\ 0, &\text{otherwise.}\end{cases}
\]
This scalar product is known to be positive \cite{pos}. 
In particular, the finite-dimensional subspaces $\mathcal F_n$ spanned by $e_\alpha$, $\alpha \in \Lambda_n$,  $n\ge 0$, are orthogonal to each other, therefore,
\[
 \mathcal F = \bigoplus_{n=0}^\infty \mathcal F_n.
\]

Operators of the Fock representation are defined as follows
\begin{align*}
 \pi_F(s_j) e_\alpha &= e_{\sigma_j(\alpha)}, 
 \\
 \pi_F(s_j^*) e_\alpha &= q(j,\alpha) \pi_F (s_{\alpha\setminus j}) \pi_F (s_{j\setminus \alpha}^*)\Omega = \begin{cases} q(j,\alpha) e_{\alpha \setminus j},& \text {$\alpha$ contains $j$}, \\ 0& \text{otherwise.}\end{cases}
\end{align*}

\section{A construction of non-Fock representations}

We start with introducing an appropriate Hilbert space. Consider an (uncountable) set of vectors $e_\gamma$, $\gamma \in \Lambda$. For these vectors, define
\begin{equation} \label{eq:scalar_product}
 (e_\beta,e_\gamma) = q(\gamma,\beta),
\end{equation}
where $q(\beta,\gamma)$ was defined above in \eqref{eq:qbg}. In particular, for any $\beta \in \Lambda$ we have $(e_\beta,e_\beta)=1$.

We say that infinite multiindices $\alpha,\beta \in \Lambda$ are equivalent, denoted by $\beta\sim\alpha$, if they ``have the same tails up to a shift'', i.e., there exist numbers $m,n$, such that $\sigma^m(\beta) = \sigma^n(\gamma)$. Fix an infinite multiindex $\alpha$ and consider a countable family of vectors $(e_\beta \mid \beta \sim\alpha)$. Define $\tilde H_\alpha$ as a linear span of this family.

\begin{proposition}
 Form \eqref{eq:scalar_product} is well-defined and positive on $\tilde H_\alpha$.
\end{proposition}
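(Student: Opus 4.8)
The plan is to deduce well-definedness and positivity of $(\cdot,\cdot)$ on $\tilde H_\alpha$ from the positivity of the Fock scalar product on $\mathcal F$. Well-definedness is essentially immediate: each entry $(e_\beta,e_\gamma)=q(\gamma,\beta)$ is the limit \eqref{eq:qbg}, which exists by the remark following that formula, and taking adjoints in $s_\mu^{*}s_\nu=q(\mu,\nu)s_{\nu\setminus\mu}s_{\mu\setminus\nu}^{*}$ gives $q(\nu,\mu)=\overline{q(\mu,\nu)}$ for finite multiindices, hence $q(\beta,\gamma)=\overline{q(\gamma,\beta)}$ after passing to the limit; extending $(\cdot,\cdot)$ sesquilinearly over the linear span $\tilde H_\alpha$ thus produces a well-defined Hermitian form. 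Positivity then amounts to showing that for every finite subfamily $\beta^{(1)},\dots,\beta^{(k)}$ with $\beta^{(i)}\sim\alpha$ the Gram matrix $G=\bigl[q(\beta^{(j)},\beta^{(i)})\bigr]_{i,j=1}^{k}$ is positive semidefinite.

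I would first block-diagonalise $G$. Call $\beta^{(i)}$ and $\beta^{(j)}$ \emph{aligned} if $\sigma^{m}(\beta^{(i)})=\sigma^{m}(\beta^{(j)})$ for some $m$; on the finite set $\{\beta^{(1)},\dots,\beta^{(k)}\}$ this is an equivalence relation, transitivity being obtained by passing to the larger of the two shift exponents involved. By the description of when the limit \eqref{eq:qbg} is nonzero, $q(\beta^{(j)},\beta^{(i)})=0$ whenever $\beta^{(i)}$ and $\beta^{(j)}$ are not aligned, so $G$ is block-diagonal along the alignment classes and it suffices to treat one class $B$. Fixing for each pair $i,j\in B$ an exponent $m_{ij}$ with $\sigma^{m_{ij}}(\beta^{(i)})=\sigma^{m_{ij}}(\beta^{(j)})$ and setting $M=\max_{i,j}m_{ij}$, a further application of shifts gives $\sigma^{M}(\beta^{(i)})=\sigma^{M}(\beta^{(j)})=:\omega$ for all $i,j\in B$, so each $\beta^{(i)}$ with $i\in B$ can be written $\beta^{(i)}=(\delta^{(i)}\omega)$ with a finite prefix $\delta^{(i)}$ of length exactly $M$.

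It remains to identify the $B$-block of $G$ with a Fock Gram matrix. For $i,j\in B$ the truncations $(\delta^{(j)},\omega_1,\dots,\omega_N)$ and $(\delta^{(i)},\omega_1,\dots,\omega_N)$ that compute $q(\beta^{(j)},\beta^{(i)})$ have equal length and share the suffix $(\omega_1,\dots,\omega_N)$, so they are permutations of one another exactly when $\delta^{(j)}$ is a permutation of $\delta^{(i)}$. In that case the relation $s_{(\mu\delta)}^{*}s_{(\nu\delta)}=q(\mu,\nu)I$ for $\nu$ a permutation of $\mu$, recorded above, makes these truncated values all equal to $q(\delta^{(j)},\delta^{(i)})$ independently of $N$, so $q(\beta^{(j)},\beta^{(i)})=q(\delta^{(j)},\delta^{(i)})=(e_{\delta^{(i)}},e_{\delta^{(j)}})_F$; otherwise no truncation of $\beta^{(i)}$ is simultaneously a shift-agreeing permutation of the corresponding truncation of $\beta^{(j)}$, and \eqref{eq:qbg} gives $q(\beta^{(j)},\beta^{(i)})=0=(e_{\delta^{(i)}},e_{\delta^{(j)}})_F$. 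Hence the $B$-block of $G$ coincides with the Gram matrix of the Fock vectors $\{e_{\delta^{(i)}}:i\in B\}\subset\mathcal F$, which is positive semidefinite because the Fock scalar product is positive \cite{pos}; summing over the alignment classes yields positivity of $(\cdot,\cdot)$ on $\tilde H_\alpha$.

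The step I expect to be the main obstacle is the bookkeeping underlying the last two paragraphs: verifying that a single shift exponent $M$ and a single tail $\omega$ serve a whole alignment class simultaneously, and then matching the corresponding block of $G$ entry by entry with an honest Fock Gram matrix --- in particular, using the common-suffix identity for $q$ exactly on the permutation locus and checking that off this locus the limit \eqref{eq:qbg} indeed vanishes.
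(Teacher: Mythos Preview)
Your argument is correct and rests on the same core idea as the paper: the Gram matrix of the $e_\beta$ block-diagonalises along classes sharing a common infinite tail (your ``alignment'' classes), and within each class the block coincides with a Fock Gram matrix via the common-suffix identity for $q$. The paper packages this differently: it builds $\tilde H_\alpha$ as an orthogonal sum of inductive limits $\varinjlim_{J_k}\mathcal F_k$, where $J_k\colon e_\gamma\mapsto e_{(\gamma,\alpha_{k+1})}$, and its embedded Lemma (that each $J_k$ is an isometric embedding) is precisely the statement that appending a common letter preserves Fock inner products. Your finite-Gram-matrix route is more elementary for the bare positivity statement; the paper's inductive-limit formulation additionally yields an explicit isometric identification $\mathcal F_k\hookrightarrow H_\alpha$, which it reuses in the proof of the subsequent Theorem when reducing computations in $H_\alpha$ back to the Fock representation. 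The bookkeeping you flag as the main obstacle---that a single exponent $M$ and tail $\omega$ serve an entire alignment class, and that off the permutation locus the limit \eqref{eq:qbg} vanishes---does go through exactly as you outline.
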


\begin{proof}
Fix a sequence $\Lambda \ni \alpha =(\alpha_1,\alpha_2,\dots)$, and
define operators $J_k\colon \mathcal F_k \to \mathcal F _{k+1}$, $k=0,1,\dots$ as follows.
\[
 \mathcal F_k \ni e_{(\gamma_1,\dots,\gamma_k)} \mapsto J_k e_{(\gamma_1,\dots,\gamma_k)} = e_{(\gamma_1,\dots,\gamma_k, \alpha_{k+1})} \in \mathcal F_{k+1}, \quad k =0,1,\dots,
\]
and extend this action to the whole $\mathcal F_{k}$ by linearity. These operators are well-defined since $(e_\gamma)_{\gamma \in \Lambda_k}$ form a linear basis in $\mathcal F_k$.

\begin{lemma}\label{lm:embed}
 Operator $J_k$, $k\ge 0$, is an isometric embedding of $\mathcal F_k$ into $\mathcal F_{k+1}$.
\end{lemma}

\begin{proof}
 Take $\beta,\gamma \in \Lambda _{k}$, then
 \begin{align*}
  (J_k e_\beta, J_k e_\gamma )_{\mathcal F_{k+1}}& = (e_{(\beta_1,\dots,\beta_k,\alpha_{k+1})},e_{(\gamma_1,\dots,\gamma_k,\alpha_{k+1})})
  \\
  & = (\pi_F(s_\gamma^*s_\beta)e_{\alpha_{k+1}},e_{\alpha_{k+1}}) 
  = q(\gamma,\beta) (\pi_F(s_{\beta\setminus\gamma}s_{\gamma\setminus \beta}^*)e_{\alpha_{k+1}},e_{\alpha_{k+1}})
  \\
  &= q(\gamma,\beta) (\pi_F(s_{\gamma\setminus \beta})^*e_{\alpha_{k+1}},\pi_F(s_{\beta\setminus\gamma})^*e_{\alpha_{k+1}}) = (e_\beta,e_\gamma)_{\mathcal F_k}.
 \end{align*}
 Indeed, if $\gamma$ is a permutation of $\beta$, then $\beta\setminus\gamma = \gamma\setminus\beta=\emptyset$, and since $s_{\emptyset} =I$, 
 \begin{gather*}
  q(\gamma,\beta) (\pi_F(s_{\gamma\setminus \beta})^*e_{\alpha_{k+1}},\pi_F(s_{\beta\setminus\gamma})^*e_{\alpha_{k+1}}) 
  = q(\gamma,\beta) (e_{\alpha_{k+1}},e_{\alpha_{k+1}}) 
  \\
  =  q(\gamma,\beta)  = (e_\beta,e_\gamma)_{\mathcal F_k}.
 \end{gather*}
If $\gamma$ is not a permutation of $\beta$, then $\beta\setminus\gamma$ and  $\gamma\setminus\beta$ are non-empty and disjoint. Since $\pi_F(s_j)^* e_{\alpha_{k+1}} \ne 0$ only if $j=\alpha_{k+1}$, this implies that 
\[
 q(\gamma,\beta) (\pi_F(s_{\gamma\setminus \beta})^*e_{\alpha_{k+1}},\pi_F(s_{\beta\setminus\gamma})^*e_{\alpha_{k+1}}) =0 = (e_\beta,e_\gamma)_{\mathcal F_k}. \qedhere
\]
\end{proof}

Consider an inductive limit $\tilde H^0_\alpha =\varinjlim _{J_k} \mathcal F_{k}$. This space can be naturally identified with a span of vectors $e_\beta$, $\beta \in\Lambda$ over all $\beta$ for which there exists $m$ such that $\beta_k = \alpha_k$ for $k > m$. 
Lemma~\ref{lm:embed} and the positivity of the Fock scalar product yields that
\[
 (e_\beta,e_\gamma) = (e_{(\beta_1,\dots,\beta_k)},e_{(\gamma_1,\dots,\gamma_k)}), \quad \text{if $\beta_j=\gamma_j =\alpha_j$ for all $j>k$}
\]
is a well-defined positive form on $\tilde H_\alpha^0$, and that 
\[
 \mathcal F_k \ni e_{(\gamma_1,\dots, \gamma_k)} \mapsto e_{(\gamma_1,\dots,\gamma_k,\alpha_{k+1},\alpha_{k+2},\dots)} \in \tilde H_\alpha^0
\]
is an isometric embedding.

Obviously, $\tilde H^0_\alpha \subset \tilde H_\alpha$, but in general (unless $\alpha$ is equivalent to a stationary sequence) it is a proper subset. Let $\beta \sim\alpha$. The same way, consider space $\tilde H^0_\beta$ with the corresponding scalar product. If there exists $m$ such that $\beta_k=\alpha_k$ for all $k>m$ (or equivalently, $\sigma^m(\alpha) = \sigma^m(\beta)$), then $\tilde H_\alpha^0 = \tilde H^0_\beta$. Otherwise $\alpha$ and $\beta$ differ in an infinite set of indices (so that $q(\alpha,\beta) =0$) and we set $\tilde H_\alpha^0$ and  $\tilde H^0_\beta$ to be orthogonal. Similarly, for any $\beta \sim\gamma\sim\alpha$ we define 
\[
 (e_\beta,e_\gamma) = 
 \begin{cases}
 (e_{(\beta_1,\dots,\beta_m)},e_{(\gamma_1,\dots,\gamma_m)})_{\mathcal F_{m}},& \text{there exists $m$, for which $\sigma^m(\beta) = \sigma^m(\gamma)$},
 \\
 0, &\text{otherwise.}
 \end{cases}
\]
The arguments above show that this form is well-defined and positive on the whole $\tilde H_\alpha$. One can easily see that the latter expression is equal to $q(\gamma,\beta)$.
\end{proof}

Define $H_\alpha$ as a completion of $\tilde H_\alpha$ with respect to the introduced above scalar product.

\begin{theorem}
1. Operators in $H_\alpha$
 \begin{align*}
  \pi_\alpha(s_j) e_\beta = e_{\sigma_j(\beta)}, \quad 
  \pi_\alpha(s_j^*) e_\beta = \begin{cases}0, & \text{$\beta$ does not contain $j$,} \\q(j,\beta)e_{\beta\setminus j}, & \text{otherwise,}\end{cases}
 \end{align*}
form well-defined $*$-representation of the $C^*$-algebra $W$.
  
 2. This representation is irreducible
 
 3. Representations corresponding to multiindices $\alpha,\alpha'$ are unitary equivalent iff the corresponding Hilbert spaces coincide, i.e., $\alpha\sim\alpha'$. 

 4. Representation $\pi_\alpha$ is not unitary equivalent to the Fock representation.
\end{theorem}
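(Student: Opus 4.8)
My plan is to treat the four parts in sequence, since each rests on the previous one. For part~1, I would first check that the formulas for $\pi_\alpha(s_j)$ and $\pi_\alpha(s_j^*)$ are mutually adjoint on the dense subspace $\tilde H_\alpha$: compute $(\pi_\alpha(s_j)e_\beta, e_\gamma) = (e_{\sigma_j(\beta)}, e_\gamma) = q(\gamma, \sigma_j(\beta))$ and $(e_\beta, \pi_\alpha(s_j^*)e_\gamma)$, and verify they agree by the same combinatorial bookkeeping with $q(\cdot,\cdot)$ and the $\setminus$ operation already set up in the Preliminaries — this is essentially the computation in Lemma~\ref{lm:embed} run in reverse. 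Then I would verify the defining relations \eqref{eq:q_ij}: the relation $\pi_\alpha(s_j)^*\pi_\alpha(s_j) = I$ follows because $\pi_\alpha(s_j)$ maps the orthonormal-generating family $\{e_\beta\}$ to a subfamily preserving inner products (by \eqref{eq:scalar_product}, since prepending $j$ to both $\beta$ and $\gamma$ leaves $q(\gamma,\beta)$ unchanged), and the $q_{ij}$-commutation $\pi_\alpha(s_i^*)\pi_\alpha(s_j) = q_{ij}\pi_\alpha(s_j)\pi_\alpha(s_i^*)$ is checked directly on each $e_\beta$ using the case analysis on whether $\beta$ (resp.\ $\sigma_j(\beta)$) contains $i$. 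Boundedness is automatic since the $s_j$ are isometries, so these extend to the completion $H_\alpha$; passing to the universal $C^*$-algebra $W$ is then formal.

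For part~2 (irreducibility), I would argue that the commutant of $\pi_\alpha(W)$ is trivial. The key structural fact is that $\{e_\beta \mid \beta \sim \alpha\}$ splits into countably many orthonormal "strands" indexed by the $\sigma^m(\alpha) = \sigma^m(\beta)$-classes, and on $\tilde H^0_\alpha$ itself the vectors $e_{(\gamma_1,\dots,\gamma_k,\alpha_{k+1},\alpha_{k+2},\dots)}$ behave like the standard basis of the Fock-type inductive limit. Given $T$ in the commutant, I would use the isometries $\pi_\alpha(s_j)$ and their adjoints to move any fixed vector $e_\alpha$ around: since every $e_\beta$ with $\beta\sim\alpha$ is reachable from $e_\alpha$ by a finite word in the $\pi_\alpha(s_j)$ and $\pi_\alpha(s_j^*)$ (chop off a finite head of $\alpha$ with adjoints, then build the head of $\beta$), $T$ is determined by its action on the single vector $e_\alpha$; then a rank/normalization argument (using $\pi_\alpha(s_j^*)\pi_\alpha(s_j)=I$ and the explicit scalar products) forces $Te_\alpha$ to be a scalar multiple of $e_\alpha$, hence $T$ scalar.

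For part~3, the "if" direction is immediate: if $\alpha\sim\alpha'$ then by construction $\tilde H_\alpha = \tilde H_{\alpha'}$ with the same scalar product and the same operator formulas, so the representations are literally equal. For "only if", I would produce a unitary invariant that distinguishes inequivalent classes — e.g.\ observe that the vector $e_\alpha$ is characterized inside $H_\alpha$ (up to phase) as a cyclic unit vector on which $\pi_\alpha(s_{\alpha_1}^*)$ acts in a prescribed way, or more robustly, note that the "tail class" of $\alpha$ can be recovered from the asymptotics of which products $\pi_\alpha(s_\beta)\pi_\alpha(s_\beta)^*$ fix a given vector; an intertwiner would have to carry this data across, which is impossible when $q(\alpha,\alpha')=0$. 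For part~4, I would simply note that the Fock representation has a vector $\Omega$ annihilated by \emph{all} $\pi_F(s_j^*)$, whereas in $\pi_\alpha$ no nonzero vector is annihilated by every $\pi_\alpha(s_j^*)$: applying $\pi_\alpha(s_j^*)$ to $e_\beta$ for $j=\beta_1$ gives $q(\beta_1,\beta)e_{\beta\setminus\beta_1}\neq 0$, and a short argument extends this from basis vectors to arbitrary nonzero vectors in $H_\alpha$ (using that the $\pi_\alpha(s_j^*)$, $j=1,\dots,d$, have no common kernel because $\sum_j \pi_\alpha(s_j)\pi_\alpha(s_j)^*$ acts invertibly on the relevant strands).

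The main obstacle I anticipate is part~2, specifically making the "every $e_\beta$ is reachable from $e_\alpha$" step and the subsequent "commutant acts as scalar on $e_\alpha$" step fully rigorous: one must be careful that the non-orthogonality of $e_\beta$'s within a single strand (they are genuinely non-orthogonal, with inner products $q(\gamma,\beta)$) does not obstruct the argument, and that the infinitely many strands are handled uniformly. A clean way around this is probably to first establish that $H_\alpha$ decomposes as an orthogonal direct sum over strands on which $\pi_\alpha$ restricted to a suitable corner looks like a shift, reducing irreducibility to a statement about a single well-understood building block; I would flag that reduction as the technical heart of the proof.
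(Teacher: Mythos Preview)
Your treatments of parts~1 and~4 are essentially what the paper does: reduce the adjointness and relation checks to the Fock-space computations already established, and for part~4 observe that any $\Omega$ annihilated by all $\pi_\alpha(s_j^*)$ is orthogonal to every $e_\beta$ (write $e_\beta=\pi_\alpha(s_{\beta_1})e_{\sigma(\beta)}$) and hence zero.

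The genuine gap is in part~2. Your cyclicity step is fine: from $e_\alpha$ one reaches any $e_\beta$ with $\beta\sim\alpha$ by stripping a finite head with adjoints and rebuilding. But the step you label ``a rank/normalization argument \dots\ forces $Te_\alpha$ to be a scalar multiple of $e_\alpha$'' is not justified, and this is precisely where the non-orthogonality you worry about bites. The natural candidates $\pi_\alpha(s_\mu s_\mu^*)$, with $\mu$ running over initial segments of $\alpha$, do \emph{not} converge to a rank-one operator onto $\mathbb{C}e_\alpha$: since $\pi_\alpha(s_j^*)e_\beta$ removes the \emph{first occurrence} of $j$ anywhere in $\beta$ rather than testing whether $\beta_1=j$, the operator $\pi_\alpha(s_\mu s_\mu^*)$ fixes many $e_\beta$ with $\beta\ne\alpha$ however long $\mu$ is. The paper fills this gap with a separate lemma: inside $W$ there exist elements $\tilde s_j$ satisfying $\tilde s_j^{\,*} s_k=\delta_{jk} I$, constructed using the finite-dimensional $C^*$-algebra generated by the range projections $p_k=s_ks_k^*$. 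Then $\pi_\alpha(\tilde s_j^{\,*})e_\beta=\delta_{j,\beta_1}e_{\sigma(\beta)}$ genuinely tests the first letter, and $P_n(\alpha)=\pi_\alpha\bigl(s_{(\alpha_1,\dots,\alpha_n)}\tilde s_{(\alpha_1,\dots,\alpha_n)}^{\,*}\bigr)$ converges strongly to a rank-one idempotent with range $\mathbb{C}e_\alpha$. Any $T$ in the commutant then preserves each line $\mathbb{C}e_\beta$, and your reachability argument finishes. Your fallback plan of decomposing $H_\alpha$ into orthogonal strands does not obviously help either, since $\pi_\alpha(s_j)$ shifts between strands (prepending $j$ changes the no-shift tail class), so there is no corner of the algebra acting on a single strand.

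The same device drives the ``only if'' half of part~3 in the paper: for $\alpha'\not\sim\alpha$ the sequence $P_n(\alpha')$ converges strongly to $0$ in $H_\alpha$ but to a nonzero operator in $H_{\alpha'}$, immediately ruling out unitary equivalence. Your proposed invariant (asymptotics of $\pi_\alpha(s_\beta s_\beta^*)$) runs into the same obstruction as above---those operators do not isolate a tail class---so without the biorthogonal $\tilde s_j$ you do not yet have a working separator here either.
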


\begin{proof}
1. We need to verify that $\pi_\alpha(s_j^*) = \pi_\alpha(s_j)^*$, $j=1,\dots, d$, and that relations \eqref{eq:q_ij} hold. Obviously, it is sufficient to verify this on the vectors $e_\gamma$, $\gamma \sim\alpha$. 

Conditions  $\pi_\alpha(s_j^*)e_\beta = \pi_\alpha(s_j)^*e_\beta$, $j=1,\dots, d$,  holds due to the way the scalar product is constructed. Indeed,  
\begin{align*}
 (\pi_\alpha(s_j)^* e_\beta,e_\gamma) & =  ( e_\beta, \pi_\alpha(s_j)e_\gamma) = ( e_\beta, e_{\sigma_j(\gamma)}),
 \\
 (\pi_\alpha(s_j^*) e_\beta,e_\gamma) & = 
 \begin{cases}0, & \text{$\beta$ does not contain $j$,} \\q(j,\beta)(e_{\beta\setminus j},e_\gamma), & \text{otherwise.}\end{cases}
 \end{align*}
If $\beta$ does not contain $j$, then the both expression are zero. Assume $\beta$ contains $j$.
According to the definition of the scalar product, $( e_{\beta\setminus j},e_\gamma) \ne 0$ only if there exists $m_1$, for which $\sigma^{m_1}(\beta \setminus j) = \sigma^{m_1}(\gamma)$. Then there exists $m_2$, for which $\sigma^{m_2}(\beta) = \sigma^{m_2}(\sigma_j(\gamma))$. Take $m= \max (m_1,m_2)$, this will in particular ensure that $(\beta_1,\dots,\beta_{m+1})$ contains $j$. Then, since the Fock representation is a $*$-representation,
\begin{align*}
 ( e_\beta, e_{\sigma_j(\gamma)})& = ( e_{(\beta_1,\dots,\beta_{m+1})}, e_{(j,\gamma_1,\dots,\gamma_m)})_{\mathcal F_{m+1}} 
 \\&
 = ( e_{(\beta_1,\dots,\beta_{m+1})},\pi_F(s_j) e_{(\gamma_1,\dots,\gamma_m)})_{\mathcal F_{m+1}}
 \\&
 =(\pi_F(s_j^*) e_{(\beta_1,\dots,\beta_{m+1})}, e_{(\gamma_1,\dots,\gamma_m)})_{\mathcal F_{m}}
 \\&
 =q(j,\beta)(e_{(\beta_1,\dots,\beta_{m+1}) \setminus j}, e_{(\gamma_1,\dots,\gamma_m)})_{\mathcal F_{m}} = q(j,\beta)( e_{\beta\setminus j},e_\gamma).
\end{align*}

To prove \eqref{eq:q_ij}, we apply the same arguments as above to reduce the situation to the case of the Fock representation.

2. We start with the following auxiliary fact.

\begin{lemma}\label{lm:biort}
 In the $C^*$-algebra $W$ there exist elements $\tilde s_j$, such that $\tilde s_j^* s_k = \delta_{jk} I$.
\end{lemma}

\begin{proof}
For each $j=1,\dots, d$, let $p_j = s_js_j^*$ be projection on the range of $\pi_\alpha(s_j)$. Since the $C^*$-algebra generated by $p_j$, $j=1,\dots,d$, is finite-dimensional \cite{ku-po}, the latter $C^*$-algebra, and therefore, $W$ as well, contains $\check p_j=\bigvee_{k\ne j}p_k$ which is a projection on the sum of ranges of $\pi_\alpha(s_k)$, $k\ne j$.

Write $c_j=(I-\check p_j)s_j$. Then $c_j^*s_k=0$, $k\ne j$, and if we show that  $c_j^*s_j$ is invertible, then 
\[
\tilde s_j =  (I-\check p_j)s_j^*(s_j^*(I-\check p_j)s_j)^{-1}
\]
is the necessary element.

So it is sufficient to prove that $c_j^*s_j = s_j^*(I - \check p_j)s_j$ is invertible, First notice that for any element $x=s_\mu s^*_\nu$, where $\mu$ and $\nu$ do not contain $j$, we have
\[
 s_j^* xs_j = s_j^* s_\mu s^*_\nu s_j = q(j,\mu) q(\nu,j)   s_\mu s_j^*s_j s^*_\nu =  q(j,\mu) q(\nu,j) x,
\]
therefore,  $s_j^*(I - \check p_j)s_j$ belongs to the finite-dimensional algebra generated by $p_k$, $k\ne j$, and thus its spectrum is finite. Then to prove the invertibility of  $s_j^*(I - \check p_j)s_j$, it is enough to show that it has zero kernel, which is equivalent to $\ker (I-\check p_j)s_j =0$.

Since the Fock representation of $W$ is exact \cite{fock1}, any element $x\in W$ can be uniquely represented as
\[
 x = p_1x_1+\dots+p_dx_d.
\]
This means that the range of $p_j$ is linearly independent of the span of the ranges of $p_k$, $k\ne j$, in particular, the ranges of $p_j$ and $\check p_j$ do not intersect. This implies  $\ker (I-\check p_j)s_j =0$.
\end{proof}

Now we prove the irreducibility of $\pi_\alpha$. For $\mu = (\mu_1,\dots,\mu_n) \in\Lambda_n$, denote $\tilde s_\mu = \tilde s_{\mu_1}\dots\tilde s_{\mu_n}$. Fix an infinite sequence $\mu=(\mu_1,\mu_2,\dots) \in \Lambda$ and consider operators
\[
 P_n(\mu) = \pi_\alpha(s_{(\mu_1,\dots,\mu_n)} \tilde s^*_{(\mu_1,\dots,\mu _n)}, \quad n\ge 1.
\]
For any vector of the form $e_\beta$, one directly see that 
\[
 \lim_{n\to\infty} P_n(\mu) e_{\beta} = \delta_{\beta\mu} e_\mu,
\]
i.e., the sequence $P_n(\mu)$ strongly converges to $P(\mu)$ which is a projection onto the one-dimensional space generated by $e_\mu$. 

Let $C$ be a bounded operator commuting with all $\pi_\alpha(x)$, $x\in W$. Then $C$ commutes with $P(\mu)$, and therefore, for any $\beta\sim\alpha$ we have $Ce_\beta = c(\beta)e_\beta$, where $c(\beta)$ is a constant. On the other hand, by the construction of $\tilde s_j$ given by Lemma~\ref{lm:biort}, for each $\beta,\gamma \sim\alpha$ there exist finite multiindices $\mu,\nu$, such that $s_\gamma = s_\mu \tilde s^*_\nu s_\beta$, so that $\pi_\alpha(s_\mu \tilde s^*_\nu) e_\beta = e_\gamma$. Since $C$ commutes with $\pi_\alpha(s_\mu \tilde s^*_\nu)$, we have
\[
 c(\beta)  e_\gamma = c(\beta) \pi_\alpha(s_\mu \tilde s^*_\nu) e_\beta 
 = \pi_\alpha(s_\mu \tilde s^*_\nu) C e_\beta 
 = C \pi_\alpha(s_\mu \tilde s^*_\nu)  e_\beta
 = C  e_\gamma = c(\gamma)e_\gamma, 
\]
i.e., $c(\beta) = c(\gamma)$ for all $\beta,\gamma\sim\alpha$. Therefore, $C$ is a scalar operator, and by the Schur lemma, $\pi_\alpha$ is irreducible.

3. Using the arguments above, if $\alpha'$ is not equivalent to $\alpha$, then 
\[
 \lim_{n\to\infty} P_n(\alpha') e_\beta =0, \quad \beta\sim\alpha,
\]
therefore, $P_n(\alpha')$ strongly converges to zero in $H_\alpha$, but in $H_{\alpha'}$ it strongly converges to a nonzero projection $P(\alpha')$.

4. Assume that there exists $\Omega \in H_\alpha$, for which $\pi_\alpha (s_j)^* \Omega =0$, $j=1,\dots, d$. For each $\beta\sim\alpha$ we have
\[
 (\Omega, e_\beta) = (\Omega, \pi_\alpha(s_{\beta_1})e_{\sigma(\beta)}) = (\pi_\alpha(s_{\beta_1})^*\Omega, e_{\sigma(\beta)}) =0.
\]
Since vectors $e_{\beta}$, $\beta \sim \alpha$, form a total set in $H_\alpha$, we get $\Omega =0$.
\end{proof}

\section{Acknowledgement}
The authors express their gratitude to Kostyantyn Krutoi for helpful discussions of the subject of this paper.


\begin{thebibliography}{99}
 \bibitem{pos} 
Bo\.{z}ejko, M., Speicher, R.: {Completely positive maps on {C}oxeter groups,
  deformed commutation relations, and operator spaces}.
\newblock Math. Ann. \textbf{300}(1), 97--120 (1994).
\newblock doi{10.1007/BF01450478}.
\newblock {https://doi.org/10.1007/BF01450478}

 \bibitem{jps}
{J{\o}rgensen}, P.E.T., Proskurin, D.P., Samo\u{\i}lenko, Y.S.: {The kernel of
  {F}ock representations of {W}ick algebras with braided operator of
  coefficients}.
\newblock Pacific J. Math. \textbf{198}(1), 109--122 (2001).
\newblock doi{10.2140/pjm.2001.198.109}.
\newblock {https://doi.org/10.2140/pjm.2001.198.109}


\bibitem{jps2}
{J{\o}rgensen}, P.E.T., Proskurin, D.P., Samo\u{\i}lenko, Y.S.: {On
  {$C^*$}-algebras generated by pairs of {$q$}-commuting isometries}.
\newblock J. Phys. A \textbf{38}(12), 2669--2680 (2005).
\newblock doi{10.1088/0305-4470/38/12/009}.
\newblock {https://doi.org/10.1088/0305-4470/38/12/009}

\bibitem{jsw}
J{\o}rgensen, P.E.T., Schmitt, L.M., Werner, R.F.: {Positive representations of
  general commutation relations allowing {W}ick ordering}.
\newblock J. Funct. Anal. \textbf{134}(1), 33--99 (1995).
\newblock doi{10.1006/jfan.1995.1139}.
\newblock {https://doi.org/10.1006/jfan.1995.1139}

 \bibitem{wick3} 
J{\o}rgensen, P.E.T., Schmitt, L.M., Werner, R.F.: {{$q$}-canonical commutation
  relations and stability of the {C}untz algebra}.
\newblock Pacific J. Math. \textbf{165}(1), 131--151 (1994).
\newblock {http://projecteuclid.org/euclid.pjm/1102621916} 

 \bibitem{fock1}
Dykema, K., Nica, A.: {On the {F}ock representation of the {$q$}-commutation
  relations}.
\newblock J. Reine Angew. Math. \textbf{440}, 201--212 (1993)

 \bibitem{fock2} 
Kennedy, M., Nica, A.: {Exactness of the {F}ock space representation of the
  {$q$}-commutation relations}.
\newblock Comm. Math. Phys. \textbf{308}(1), 115--132 (2011).
\newblock {10.1007/s00220-011-1323-9}.
\newblock {https://doi.org/10.1007/s00220-011-1323-9}
 
 \bibitem{coherent} J{\o}rgensen, P.E.T., Werner, R.F.: {Coherent states of the {{$q$}-canonical commutation relations}}
 \newblock Comm. Math. Phys. \textbf{164}(3), 455--471 (1994)

 \bibitem{ku-po} Kuzmin, A., Pochekai, N.: {Faithfulness of the {F}ock representation of the
  {$C^*$}-algebra generated by {$q_{ij}$}-commuting isometries}.
\newblock J. Operator Theory \textbf{80}(1), 77--93 (2018)


 \end{thebibliography}
\end{document}